\newtheorem{theorem}{Theorem}
\newtheorem{lemma}[theorem]{Lemma}
\newtheorem*{remark}{Remark}
\theoremstyle{remark}
\newtheorem*{claim*}{Claim}
\DeclarePairedDelimiter\ceil{\lceil}{\rceil}
\newcommand{\N}{\ensuremath{\mathbb{N}}}
\newcommand{\cd}{\ensuremath{{\rm deg}}}
\renewcommand{\epsilon}{\varepsilon}
\title{Interval colorings of graphs -- coordinated and unstable no-wait schedules}
\author{Maria Axenovich\\
\small Department of Mathematics\\[-0.8ex]
\small Karlsruhe Institute of Technology\\[-0.8ex] 
\small Germany\\
\small\href{mailto:maria.aksenovich@kit.edu}{\tt maria.aksenovich@kit.edu} \\
\and
Michael Zheng\\
\small Department of Mathematics\\[-0.8ex]
\small Karlsruhe Institute of Technology\\[-0.8ex] 
\small Germany\\
\small\href{mailto:mxxz20@gmail.com}{\tt mxxz20@gmail.com}}
\begin{document}

\maketitle

\begin{abstract}
A proper edge-coloring of a graph is  an interval coloring if  the labels on the edges incident to any vertex form an interval of consecutive integers.  
Interval thickness $\theta(G)$ of a graph $G$ is the smallest number of  interval colorable graphs edge-decomposing G.  We prove that $\theta(G) =o(n)$ for any graph $G$ on $n$ vertices. 
This improves the previously known bound of $2\lceil n/5 \rceil$, see Asratian, Casselgren, and Petrosyan \cite{ACP}.
While we do not have a single example of a graph with interval thickness strictly greater than $2$, we construct bipartite graphs whose interval spectrum has arbitrarily many arbitrarily large gaps. Here, an interval spectrum of a graph is the set of all integers $t$ such that the graph has an interval coloring using $t$ colors.

Interval colorings of bipartite graphs naturally correspond to no-wait schedules, say for parent-teacher conferences, where a conversation between any teacher and any parent lasts the same amount of time. Our results imply that any such conference with $n$ participants can be coordinated in $o(n)$ no-wait periods.  In addition, we show that for any integers  $t$ and $T$, $t<T$,   there is a set of pairs of  parents and teachers wanting to talk to each other, such that any no-wait schedules are unstable --  they could last $t$ hours and could last $T$ hours, but there is no possible no-wait schedule lasting $x$ hours if $t<x<T$. 
\end{abstract}

\section{Introduction}
Astarian and Kamalian \cite{AK} introduced the notion of interval colorability of graphs. We say that a graph $G=(V,E)$ is {\it interval colorable} if there is an edge-coloring $c\colon E \rightarrow \mathbb{Z}$ such that for any vertex $x$, the multiset of colors incidents to $x$, i.e., $\{ c(xy): xy\in E\}$ forms a set of consecutive integers, in other words an {\it interval of integers}. The respective coloring is called an {\it interval coloring}. In particular, an interval coloring is a proper coloring, i.e., there are no two adjacent edges having the same color. \\

  Interval colorings are applied in scheduling -- for example in case of teacher-parent conferences or machine-jobs assignments. In the former case one wants to schedule meetings between a parent and a teacher for given parent-teacher pairs such that each such meeting lasts the same amount of time and there is no waiting time between the meetings for any of the parents and any of the teachers.  \\

Interval colorable graphs include all trees. In addition, all regular bipartite graphs are interval colorable since by K\H{o}nig's theorem they are edge decomposable into perfect matchings. On the other hand, any graph of Class $2$ is not interval colorable, where a graph is of Class $2$ if its edge-chromatic number is greater than its maximum degree, $\Delta(G)$. For example a triangle is such a graph. Indeed, otherwise considering the labels in an interval coloring modulo $\Delta(G)$ gives a proper edge coloring using at most $\Delta(G)$ colors. Interval colorings for special classes of graphs and related problems were considered, see for example \cite{ACP, As,  ADH,  Ax, BD, BD1, BDJZ, CP, CT,   GK, GK1,  H, HLT,   PKh, P,  YL, Z}.\\

Let $c(G) = \{c(e): e\in E(G)\}$ be the set of colors used on $G$ by a coloring $c$. 
It is easy to see that for a connected graph $G$, and an interval coloring $c$, $c(G)$ is a set of consecutive integers. Here, we shall assume that all considered graphs are connected. Moreover, we assume that all objects considered are finite.\\

If there is an interval coloring of a graph $G$ such that $|c(G)|=t$, we say that $G$ is $t$-{\it interval colorable}. 
Let the {\it interval spectrum} of $G$, denoted by $S(G)$ be the set of all integers $t$ such that $G$ is $t$-interval colorable. Note that $S(G)$ might be empty.
The {\it  interval thickness} of a graph $G$, denoted $\theta(G)$,  is the smallest integer $k$ such that the graph can be edge-decomposed into $k$ interval colorable graphs. In the language of parent-teacher conferences, having an interval thickness of the respective graph equal to $x$ implies that one can schedule the conference in $x$ days with no waiting time for anyone during any of these $x$ days. Let $\theta(n)$ be the largest interval thickness of an $n$-vertex graph. Interval thickness was considered for several special classes of graphs and bounded in terms of various graph parameters, \cite{ACP}.
Most notably $\theta(G)\leq \gamma(G)$, where $\gamma(G)$ is the arboricity of $G$, i.e., the minimum number of forests edge-decomposing $G$.
The following result gives best known bounds on $\theta(n)$.

 \begin{theorem}[Asratian, Casselgren, Petrosyan \cite{ACP}]\label{basic}
 For any integer $n\geq 3$, $2\leq \theta(n) \leq 2 \lceil n/5 \rceil$.
 \end{theorem}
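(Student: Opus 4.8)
The plan is to establish the two inequalities separately; the lower bound is immediate, and the upper bound carries the content. Throughout, $G$ denotes an $n$-vertex graph, and I use three elementary facts freely: an interval colorable graph is of Class~$1$ (reduce its colors modulo $\Delta$ to obtain a proper $\Delta$-edge-coloring); $\theta(H)\le\gamma(H)$ for every graph $H$; and a vertex-disjoint union of interval colorable graphs is interval colorable, so that the interval thickness of such a union equals the maximum of the thicknesses of its components. For $\theta(n)\ge2$ it suffices to exhibit, for each $n\ge3$, one $n$-vertex graph that is not interval colorable, and any graph of Class~$2$ works: the disjoint union of a triangle with $n-3$ isolated vertices is such a graph. (If a connected witness is preferred, then for $n\ge5$ one may attach a pendant path of suitable length at a vertex of degree $\Delta-1$ of $K_5-e$, which is of Class~$2$ because four matchings of a $5$-vertex graph cover at most $8<9$ edges, an operation that changes neither $\Delta$ nor $\chi'$.)

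For the upper bound, partition $V(G)$ into $k:=\lceil n/5\rceil$ blocks $V_1,\dots,V_k$, each of size at most $5$, and split $E(G)$ into the \emph{internal} edges $\bigcup_i G[V_i]$ and, for each pair $i<j$, the \emph{cross piece} $B_{ij}$ of those edges with one endpoint in $V_i$ and one in $V_j$. The internal edges cost only $2$: every graph on at most five vertices has interval thickness at most $2$. Indeed, if such a graph has at most eight edges this follows from $\theta\le\gamma$, since Nash--Williams' formula gives $\gamma\le2$ in that range; and the only graphs on five vertices with more than eight edges are $K_5$ and $K_5-e$, which decompose as $K_5=C_4\cup F$ --- where $F$ is obtained from the star $K_{1,4}$ by adding a perfect matching on its four leaves --- and $K_5-e=K_{1,4}\cup(K_4-e)$, in each case into two interval colorable graphs. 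Since the $G[V_i]$ are vertex-disjoint, uniting the first (respectively the second) parts of these decompositions exhibits $\bigcup_i G[V_i]$ as an edge-union of two interval colorable graphs.

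Each cross piece $B_{ij}$ is bipartite with $\Delta(B_{ij})\le5$, so by K\"onig's theorem it is a union of five matchings; grouping these as $3+2$ writes $B_{ij}$ as the edge-union of a bipartite graph of maximum degree at most $3$ and a graph of maximum degree at most $2$. The latter is a disjoint union of paths and even cycles, hence interval colorable, and the former is interval colorable by the known fact that every bipartite graph of maximum degree at most $3$ admits an interval coloring; thus $\theta(B_{ij})\le2$. Now decompose the complete graph $K_k$ on the blocks into matchings --- $k-1$ of them if $k$ is even and $k$ if $k$ is odd --- and observe that for each such matching $M$ the graph $\bigcup_{\{i,j\}\in M} B_{ij}$ is a vertex-disjoint union of cross pieces and hence has interval thickness at most $2$. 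For $k$ even this gives $\theta(G)\le 2+2(k-1)=2k$. For $k$ odd one uses a near-$1$-factorization of $K_k$, in which each of the $k$ matchings misses a distinct block: the at most five internal edges of that block can be absorbed, vertex-disjointly, into the corresponding cross layer, so that the $k$ layers cover all of $E(G)$ and $\theta(G)\le2k$. In both cases $\theta(G)\le2\lceil n/5\rceil$, as claimed.

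I expect the one genuinely non-elementary ingredient to be the implication ``bipartite and $\Delta\le3$ $\Rightarrow$ interval colorable'' (together with the small amount of parity bookkeeping for $k$): this is precisely what allows each cross layer to cost $2$ rather than $3$, and so it drives the bound, whereas the K\"onig and matching decompositions, the finite check on graphs with at most five vertices, and the disjoint-union principle are all routine. It is also where the constant is most clearly lossy --- any improvement in what is known about interval colorings of sparse bipartite graphs of small order feeds directly into a smaller constant here --- which is ultimately why one may hope, as this paper does, to lower $\theta(n)$ all the way to $o(n)$.
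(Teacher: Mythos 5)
This statement is quoted from Asratian, Casselgren, and Petrosyan \cite{ACP}; the paper you were given does not prove it, so there is no in-paper argument to compare against. Your reconstruction is correct, and it is essentially the argument one would expect behind the $2\lceil n/5\rceil$ bound: partition the vertices into $k=\lceil n/5\rceil$ blocks, pay $2$ for the internal edges, pay $2$ per layer of a (near-)$1$-factorization of $K_k$ for the cross edges, and use vertex-disjointness to keep each layer's cost at $2$. The two delicate points both check out. First, every graph on at most five vertices has interval thickness at most $2$: Nash--Williams gives arboricity at most $2$ whenever there are at most eight edges, and your explicit decompositions of the two exceptions are valid --- $K_5$ into $C_4$ and the bowtie (which has the interval coloring $1,3,2$ on one triangle and $2,4,3$ on the other), and $K_5-e$ into $K_{1,4}$ and $K_4-e$ (which is interval colorable, e.g.\ with $1$ on the chord and $\{2,3\}$ crossed on the remaining four edges). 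Second, the $3+2$ split of each cross piece hinges on Hansen's theorem that bipartite graphs of maximum degree at most $3$ are interval colorable (reference \cite{H} in this paper's bibliography); you correctly identify this as the one genuinely non-elementary input, and without it each cross layer would cost $3$ and the constant would degrade. The parity bookkeeping ($2+2(k-1)$ for $k$ even, absorption of the missed block's internal edges for $k$ odd) is also right, and the lower bound via a Class~$2$ witness is standard. The only cosmetic remark: since this paper restricts attention to connected graphs, your connected witnesses are the safer choice, and for $n=4$ one can take the triangle with a pendant edge, which a short case check shows is not interval colorable even though it is Class~$1$.
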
  

Here, we improve the upper bound:
\begin{theorem}\label{reg-bound}
$\theta(n)=o(n)$.
\end{theorem}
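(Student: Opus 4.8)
I would reduce the problem to bipartite graphs and then attack those by repeatedly peeling off interval colorable subgraphs in order to bring the maximum degree down to a constant. Write $\beta(m)$ for the largest interval thickness of a bipartite graph on at most $m$ vertices. The first step is to show that $\theta(n)=o(n)$ follows once one knows $\beta(m)=o(m)$. Given $G$ on $n$ vertices and a parameter $k$, partition $V(G)$ into $k$ groups $U_1,\dots,U_k$ of size at most $\lceil n/k\rceil$. The graphs $G[U_1],\dots,G[U_k]$ are pairwise vertex-disjoint, and since a vertex-disjoint union of interval colorable graphs is interval colorable, their union edge-decomposes into $\theta(\lceil n/k\rceil)$ interval colorable graphs. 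For the edges between groups, fix a proper edge-colouring of $K_k$ with at most $k$ colours; for each colour class $M$ (a matching of $K_k$) the graph $\bigcup_{\{a,b\}\in M}G[U_a,U_b]$ is a vertex-disjoint union of bipartite graphs, each on at most $2\lceil n/k\rceil$ vertices, hence edge-decomposes into at most $\beta(2\lceil n/k\rceil)$ interval colorable graphs. Summing, $\theta(n)\le\theta(\lceil n/k\rceil)+k\,\beta(2\lceil n/k\rceil)$, and taking $k=\lceil\sqrt n\rceil$ together with the trivial bound $\theta(m)\le m$ gives $\theta(n)\le(1+o(1))\sqrt n\cdot\beta(2\sqrt n)+\sqrt n+1$, which is $o(n)$ as soon as $\beta(m)=o(m)$.

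\textbf{The easy bipartite sub-cases.} Let $H$ be bipartite with parts $X,Y$, $|X|+|Y|=m$. If $\min(|X|,|Y|)=o(m)$, then decomposing $E(H)$ into the stars centred at the smaller side writes $H$ as an edge-union of $\min(|X|,|Y|)$ forests, so $\theta(H)\le\min(|X|,|Y|)=o(m)$ and we are done. If $\Delta(H)\le D$ for a fixed constant $D$, then $\theta(H)\le\gamma(H)\le\lceil(D+1)/2\rceil=O(1)$, again finishing. So the only case that matters is a roughly balanced bipartite graph of large maximum degree, and the task becomes: decompose such an $H$ into $o(m)$ interval colorable graphs.

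\textbf{Degree reduction — the crux.} The core step I would try to establish is that every bipartite graph $H$ contains a single interval colorable spanning subgraph $F$ whose removal substantially decreases the maximum degree, say $\Delta(H-F)\le(1-c)\Delta(H)$ for an absolute constant $c>0$ (ideally $\Delta(H-F)\le\tfrac12\Delta(H)$). Granting this, one iterates the peeling: after $O(\log m)$ steps $\Delta$ falls below a constant and the previous paragraph closes the argument with $O(1)$ further interval colorable graphs, giving $\beta(m)=O(\log m)=o(m)$; a weaker per-step gain would still yield $o(m)$. To build $F$ it suffices to cover, at each vertex $v$ with $\deg_H(v)>\tfrac12\Delta(H)$, at least $\deg_H(v)-\tfrac12\Delta(H)$ of its incident edges, and the natural way to do this while keeping $F$ interval colorable is to take $F$ to be a vertex-disjoint union of regular bipartite subgraphs of $H$ (these are interval colorable, by König's theorem). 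Thus the real content is a Hall/deficiency-type statement: an arbitrary bipartite graph contains such a union saturating its high-degree vertices. When $H$ expands well this follows from the defect form of König's theorem (a $d$-factor with $d$ close to the minimum degree on the high-degree part); the only obstruction is a vertex set with an unusually small neighbourhood, and such a set forces a near-complete bipartite configuration that can instead be removed as a single complete bipartite piece.

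\textbf{Main obstacle.} The genuine difficulty is precisely this last point: producing one interval colorable subgraph that simultaneously ``handles'' vertices whose degrees may differ wildly, so that deleting it truly reduces $\Delta(H)$. Making the expansion/non-expansion dichotomy precise and quantitative is where the work lies; the two reduction steps are routine bookkeeping once $\beta(m)=o(m)$ is known.
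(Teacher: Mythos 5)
Your two reduction steps are fine: the splitting $\theta(n)\le\theta(\lceil n/k\rceil)+k\,\beta(2\lceil n/k\rceil)$ via a proper edge-colouring of $K_k$ is correct (vertex-disjoint unions of interval colorable graphs are interval colorable), and the unbalanced and bounded-degree bipartite cases are handled correctly by star decompositions and arboricity. But the argument then rests entirely on the degree-reduction lemma -- that every bipartite $H$ contains a single interval colorable subgraph $F$, built as a vertex-disjoint union of regular bipartite pieces (or complete bipartite pieces), with $\Delta(H-F)\le(1-c)\Delta(H)$ -- and this is not proved; it is precisely the hard part of the problem, not routine bookkeeping. The proposed dichotomy does not cover the typical bad case: a vertex $v$ of degree $\Delta$ all of whose neighbours have degree, say, $2$ or some slowly growing function. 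No $d$-regular subgraph with $d>2$ can contain $v$ (every vertex of a $d$-regular subgraph must have $H$-degree at least $d$), so regular pieces reduce $\deg(v)$ by only $O(1)$ per peel; such a configuration is neither an expander in the needed sense nor near-complete-bipartite; and the fallback of peeling stars fails because stars centred at distinct high-degree vertices overlap in their leaf sets, and their union is just another bipartite graph with the same problem. A deficiency form of K\H{o}nig's theorem gives a subgraph with $\deg_F(v)\ge\deg_H(v)-\Delta/2$ for all $v$ (take half the matchings in a proper $\Delta$-edge-colouring), but that subgraph has no reason to be interval colorable, which is the whole point. So as written the proof has a genuine gap at its core.

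For comparison, the paper does not attempt degree reduction at all. It applies Szemer\'edi's Regularity Lemma, passes from each dense $\varepsilon$-regular pair to a super-regular subpair, and uses the Alon--R\"odl--Ruci\'nski theorem to extract a spanning regular bipartite subgraph containing all but an $O(\varepsilon)$-fraction of that pair's edges; a proper edge-colouring of $K_k$ then packages these into $M=M(\varepsilon)$ interval colorable graphs covering all but $\gamma n^2$ edges of $G$. The leftover sparse graph is absorbed by the arboricity bound $\gamma(G'')\le\lceil\sqrt{\|G''\|/2}\rceil\le\sqrt{\gamma}\,n$, giving $\theta(n)\le M+\sqrt{\gamma}\,n=o(n)$. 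The key point is that regularity lets one find large regular subgraphs capturing \emph{almost all} edges and then pay a sublinear price for the rest, rather than insisting that a single interval colorable piece lower the maximum degree everywhere. If you can prove your degree-reduction lemma you would get a much better bound (polylogarithmic, even constant), but that is an open-problem-sized claim, not a step one can wave through.
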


To prove this result we employ the Regularity Lemma of Szemer\'edi, a version presented in Diestel \cite{D}, and a result by Alon, R\"odl, and Ruci\'nski \cite{ARR} showing an existence of dense regular subgraphs in $\epsilon$-regular pairs.\\

In addition, we show that the spectrum could have large gaps of large sizes. Here, a {\it gap} of a set of integers $S$ is a maximal non-empty set $X$ of consecutive integers, such that $X\cap S= \emptyset$, 
$\min S <\min X$ and $\max S>\max X$. For example, a set $\{2, 3, 6, 7\}$ has one gap $\{4, 5\}$ of size $2$.

\begin{theorem}\label{spectral-gaps}
For any natural numbers $k$ and $d$ there is a graph $G$ such that the spectrum $S(G)$ has exactly  $k$  gaps of size at least $d$ each.
\end{theorem}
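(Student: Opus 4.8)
The plan is to isolate all the difficulty in a single rigid \emph{gadget} and then build $G$ by chaining $k$ copies of it in series, so that the interval spectrum of $G$ becomes an arithmetic-progression-like set with exactly $k$ gaps.

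\textbf{Target gadget.} First I would aim to construct a connected bipartite graph $B$ with two leaves $s$ and $t$, with neighbours $\sigma$ and $\tau$ respectively, such that:
\begin{enumerate}[(i)]
\item (rigidity) in every interval colouring $c$ of $B$ the edge $s\sigma$ receives the smallest colour of $c(B)$ and $t\tau$ receives the largest;
\item (clustered spectrum) $S(B)$ is the union of two intervals of bounded length whose gap has size $N-1$, where $N$ can be made as large as we please; the cleanest target is $S(B)=\{p,\ p+N\}$ (exactly two values).
\end{enumerate}
Already a connected bipartite graph with \emph{any} gap in its spectrum is a non-obvious object, so this is where the real work will sit; I discuss it in the last paragraph.

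\textbf{Chaining and counting gaps.} Given $k$, take disjoint copies $B^{(1)},\dots,B^{(k)}$. For $1\le i<k$ delete the leaves $t^{(i)}$ and $s^{(i+1)}$ and add the edge $f_i=\tau^{(i)}\sigma^{(i+1)}$; keep the leaves $s^{(1)}$ and $t^{(k)}$, and call the result $G$. Since each $f_i$ is a bridge, $G$ is bipartite (every cycle lies inside one $B^{(i)}$) and connected. The key claim, which I would verify directly, is
\[
S(G)=\{\,r_1+\dots+r_k-(k-1)\ :\ r_i\in S(B)\,\},
\]
i.e.\ $kS(B)$ shifted by a constant. For ``$\supseteq$'': take interval colourings $c_i$ of $B^{(i)}$ using $r_i$ colours, translate them so $\max c_i(B^{(i)})=\min c_{i+1}(B^{(i+1)})$; by (i) the two leaf edges merged into $f_i$ carry exactly this common colour, so the $c_i$ glue to an interval colouring of $G$ with palette $\bigcup_i c_i(B^{(i)})$, an interval of size $\sum_i r_i-(k-1)$. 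For ``$\subseteq$'': given an interval colouring $c$ of $G$, restrict it to each $B^{(i)}$, reading the colour of $f_{i-1}$ (resp.\ $f_i$) onto the deleted leaf edge at $\sigma^{(i)}$ (resp.\ $\tau^{(i)}$); each vertex then sees the same multiset of colours as in $G$, so this is an interval colouring of $B^{(i)}$ and uses $r_i\in S(B)$ colours, while (i) forces $c(f_i)=\max c_i(B^{(i)})=\min c_{i+1}(B^{(i+1)})$, so the palettes stack into one interval of size $\sum_i r_i-(k-1)=|c(G)|$. With $S(B)=\{p,\ p+N\}$ this gives $S(G)=\{\,kp-(k-1)+jN:\ 0\le j\le k\,\}$, an arithmetic progression of $k+1$ terms with common difference $N$, hence with exactly $k$ gaps, each of size $N-1$; taking $N=d+1$ finishes the theorem. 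If instead $S(B)$ is a union of two clusters of bounded sizes separated by $N-1$, the same computation shows $S(G)=kS(B)-(k-1)$ is a union of $k+1$ clusters whose $k$ gaps each have size at least $N-1$ minus a bounded quantity depending only on $k$ and the cluster sizes; choosing $N$ large enough then suffices.

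\textbf{The gadget --- the main obstacle.} What remains, and is the crux, is to produce $B$. The idea is to assemble it from \emph{rigid} pieces: stars $K_{1,r}$ are uniquely interval colourable up to translation and use exactly $r$ colours, and by linking such stars through paths and small biregular bipartite blocks one can force a single fixed colour pattern to propagate. Concretely I would design $B$ around one ``switch'': a local configuration completable in exactly two ways --- a short completion keeping the palette inside $p$ colours, and a long completion that is forced to spread it over $p+N$ --- with rigid stars hung at $\sigma$, at $\tau$, and along $B$ so as to (a) pin $s\sigma$ to the minimum and $t\tau$ to the maximum colour and (b) rule out every palette size strictly between $p$ and $p+N$. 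An alternative starting point is a known bipartite graph with a small spectral gap (or a bipartite graph that is not interval colourable at all, repaired with pendant stars), followed by a blow-up/subdivision that amplifies the gap to size $\ge d$ while installing the rigidity. Verifying property (i), and especially the ``no intermediate palette size'' half of (ii), for the resulting $B$ is the hard step; everything else is the bookkeeping of the previous paragraph.
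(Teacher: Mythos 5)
There is a genuine gap, in fact two. First, your entire construction rests on the gadget $B$ --- a connected bipartite graph with a prescribed gap in its spectrum and a rigidity property at its two leaf edges --- and you do not construct it; you explicitly defer it as ``the crux.'' But that gadget \emph{is} the theorem's content: exhibiting even one bipartite graph with a spectral gap is the hard part, and the paper imports it wholesale from Sevastianov's graph $F(b,T)$, whose rigidity lemma (every interval colouring uses exactly $T+1$ colours and pins the pendant-edge colours to an interval starting at the $13$th colour from one end of the palette) is the technical heart of the whole argument. A proposal that reduces the theorem to ``build a bipartite graph with a two-valued spectrum and suitable rigidity'' has reduced it to a problem at least as hard as the original.

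Second, even granting the gadget, your rigidity requirement (i) is literally unachievable: if $c$ is an interval colouring then so is $e\mapsto -c(e)$, which swaps minimum and maximum, so no graph with more than one colour can have $c(s\sigma)=\min c(B)$ in \emph{every} interval colouring. The honest version of (i) is symmetric --- $\{c(s\sigma),c(t\tau)\}=\{\min c(B),\max c(B)\}$ --- and with that version your key identity $S(G)=kS(B)-(k-1)$ fails, because a chained copy $B^{(i)}$ may have its palette extend \emph{downward} from $c(f_{i-1})$ rather than upward. The sequence $c(f_0),c(f_1),\dots,c(f_k)$ is then a walk on $\mathbb{Z}$ with steps of absolute value $r_i-1$ and arbitrary signs, and $|c(G)|$ is the \emph{range} of that walk, not the sum of its step lengths. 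Already for $k=2$ and $S(B)=\{p,p+N\}$ this puts values such as $p$, $p+N$, $2p-1$, $2p+N-1$, $2p+2N-1$ into $S(G)$, giving more than $k$ gaps and destroying the ``exactly $k$'' count. The paper avoids exactly this folding problem by attaching its $k$ small gadgets $F_1,\dots,F_k$ \emph{in parallel} to one large central gadget $F_0=F(k,3k^2d+1)$ via pendant edges, classifying each $F_j$ as nested inside $F_0$'s palette (``type 1'') or protruding (``type 2''), and choosing the sizes $T_j=2jdk+1$ to grow fast enough that the resulting union of intervals $\{T_0+1\}\cup\bigcup_{j=1}^k[T_0+T_j-22-k,\,T_0+T_j-23]$ has exactly $k$ gaps. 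If you want to salvage a series construction, you must either prove a one-sided rigidity that genuinely breaks the reflection symmetry (impossible, as noted) or carry out the walk-range analysis honestly and choose wildly different gadget sizes so that all achievable ranges still cluster into $k+1$ blocks.
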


This theorem is proved by giving an explicit construction of such a graph that in turn is built of parts from a construction by Sevastianov \cite{S}. In the language of parent-teacher conferences, this result implies for example that there could be such a set of parent-teacher  pairs  willing to talk to each other so that one can schedule an optimal  no-wait conference lasting $5$ hours, but if the school secretary doesn't manage to find an optimal scheduling, the only other option for a no-wait conference would require at least $105$ hours.\\

We shall  give necessary definitions and preliminary results for the upper bound on interval thickness  in Section \ref{observations} and for the gaps in the interval spectrum in Section \ref{constructions}.  The main results are proved in Section \ref{proofs}.  For some results in this paper, see also a bachelor thesis of the second author, M. Zheng,  \cite{Z}.

                      \section{Definitions and preliminary results} \label{observations}


For standard graph theoretic notions we refer the reader to the book by Diestel \cite{D}. We shall denote the number of vertices and the number of edges  in a graph $G$ by $|G|$ and $\| G\| $ respectively.  We shall need some standard terminology for using the Regularity Lemma. 
	For a graph $G$,  let $X$ and $Y$ be disjoint vertex sets and $\varepsilon > 0$. We define $G[X,Y]$ to be a bipartite graph with parts $X$ and $Y$ containing all edges of $G$ with one endpoint in $X$ and another in $Y$. 
		Let  $\| {X,Y}\| $ to be the number of edges in $G[X,Y]$ and the density $d(X,Y)$ of $(X,Y)$ to be
				$d(X,Y) = \frac{\| {X,Y}\| }{|X\| Y|}.$ Let $\delta(X,Y)=\delta(G[X,Y])$, be the mininum degree of $G$.  For a vertex $x$, we denote the neighbourhood of $x$ by  $N(x)$ and the degree of $x$ by $\cd(x)$.

			 A pair $(X,Y)$ is an {\it $\varepsilon$-regular pair}  in $G$ or more precisely a $(d, \varepsilon)$-regular pair if  $X$ and $Y$ are disjoint vertex sets in $G$ and  $|{d - d(A,B)}| \leq \varepsilon$ for all $A \subseteq X, B \subseteq Y$ with $|A| \geq \varepsilon |{X}|, |B| \geq \varepsilon |Y|$ and $d = d(X,Y)$. 
			 We call an $\varepsilon$-regular pair $(X,Y)$  in $G$ a {\it super $\varepsilon$-regular pair} in $G$ if $|X|=|Y|$ and 
			$$\delta(X,Y) \geq (d(X,Y) - \varepsilon)|X|. $$
			A bipartite graph $G'$ with parts $X$ and $Y$ is called a {\it super $(d,\varepsilon)$-regular graph}  if $(X,Y)$ is super $\varepsilon$-regular pair in $G'$ with density $d$.
			\begin{enumerate}
				\item $|{V_0}| \leq \varepsilon |{V}|$, 
				\item $|{V_1}| = |{V_2}| = \dots = |{V_k}|$,  
				\item All but at most $\varepsilon k^2$ of the pairs $(V_i, V_j)$ for $1 \leq i < j \leq k$ are $\varepsilon$-regular. 
			\end{enumerate}
	\begin{theorem}[Szemer\'edi's Regularity Lemma \cite{D}]
		For every $\varepsilon > 0$ and every integer $m \in \N$ there is an $M \in \N$ such that every graph of order at least $m$ has an $\varepsilon$-regular partition $V_0 \cup \cdots \cup V_k$ with $m \leq k \leq M$. 
		\label{thm:szemeredi}
	\end{theorem}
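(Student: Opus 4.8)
The plan is to run the classical \emph{energy-increment} (index-increment) argument. First I would attach to every partition $\cP = \{V_1,\dots,V_k\}$ of the vertex set its \emph{index}
$$ q(\cP) = \sum_{1\le i<j\le k} \frac{|V_i|\,|V_j|}{|V|^2}\, d(V_i,V_j)^2 , $$
and note at once that $0 \le q(\cP)\le 1$, since each $d(V_i,V_j)^2\le 1$ and the weights sum to at most $\tfrac12$. The entire proof then reduces to the statement: \emph{as long as an equitable partition is not $\varepsilon$-regular, one can refine it, adding only a bounded number of new parts, so that the index grows by a fixed amount $c(\varepsilon)>0$}. Since the index can never exceed $1$, this refinement step can be applied at most $\lceil 1/c(\varepsilon)\rceil$ times before we are forced to arrive at an $\varepsilon$-regular partition, and the part-count bound accumulated over these rounds will define $M(\varepsilon,m)$.

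Second, I would prove the two standard auxiliary facts about the index. (i) \emph{Monotonicity under refinement}: if $\cP'$ refines $\cP$ then $q(\cP')\ge q(\cP)$; this is Cauchy--Schwarz applied within each cell (equivalently, Jensen for $x\mapsto x^2$, reading densities as conditional averages of the adjacency matrix). (ii) \emph{Irregular pairs force a gain}: if $(V_i,V_j)$ is not $\varepsilon$-regular, witnessed by $A\subseteq V_i$, $B\subseteq V_j$ with $|A|\ge\varepsilon|V_i|$, $|B|\ge\varepsilon|V_j|$ and $|d(A,B)-d(V_i,V_j)|>\varepsilon$, then splitting $V_i$ into $\{A,\,V_i\setminus A\}$ and $V_j$ into $\{B,\,V_j\setminus B\}$ raises the contribution of this single pair to the index by more than $\varepsilon^4\,|V_i||V_j|/|V|^2$.

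Third, I would combine (i) and (ii). If an equitable partition $\cP$ with $k$ nonexceptional parts fails to be $\varepsilon$-regular, then more than $\varepsilon k^2$ of its pairs are irregular; picking a witness for each and letting $\cP'$ be the common refinement of all the resulting bipartitions, each $V_i$ is cut by at most $k-1$ witnesses, so $\cP'$ has at most $k\cdot 4^k$ parts, while by (i) applied to the untouched pairs and (ii) applied to the irregular ones, $q(\cP')\ge q(\cP)+\varepsilon^4\cdot\varepsilon k^2\cdot\tfrac{1}{k^2}\cdot(\text{const})\ge q(\cP)+\tfrac12\varepsilon^5$. Iterating at most $2\varepsilon^{-5}$ times starting from any initial partition into (at least) $m$ parts, the number of parts is bounded by a tower of height $O(\varepsilon^{-5})$, which is our $M$.

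Finally — and this is the part that needs genuine care rather than the clean energy bookkeeping above — I would have to carry out the whole iteration while keeping the partition \emph{equitable} with a small exceptional set $V_0$, $|V_0|\le\varepsilon|V|$, instead of allowing arbitrary partitions. The standard device is, at each round, to chop every $V_i$ (and each of its at most $4^k$ atoms) into pieces of one common small size, sweeping the bounded number of leftover vertices from every part into $V_0$; one then checks that over the $O(\varepsilon^{-5})$ rounds $V_0$ stays below $\varepsilon|V|$ (this forces $|V|$, hence $k$, to be large, which is exactly why the conclusion requires order at least $m$ and why $M$ depends on $m$), and that moving these few vertices into $V_0$ perturbs every density, and hence $q$, by far less than the $\tfrac12\varepsilon^5$ gained — so the index-increment argument survives the equitabilization. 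Verifying that this index loss is dominated by the gain, together with the accounting that keeps all part sizes equal, is the main technical obstacle; the rest is the energy argument sketched above.
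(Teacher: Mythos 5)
The paper does not prove this statement at all: Szemer\'edi's Regularity Lemma is imported as a black box from Diestel \cite{D}, so there is no proof of record to compare against. Your energy-increment sketch is precisely the canonical argument from that reference --- the index $q(\cP)$, monotonicity under refinement via Cauchy--Schwarz, the defect form of Cauchy--Schwarz giving a gain of order $\varepsilon^4|V_i||V_j|/|V|^2$ per irregular pair, the common refinement with at most exponentially many atoms per part, and termination after $O(\varepsilon^{-5})$ rounds yielding the tower-type $M$ --- and the outline is correct. The one place where your write-up is a plan rather than a proof is the equitabilization step (re-cutting atoms to a common size, controlling the growth of $V_0$ over all rounds, and checking that the resulting perturbation of $q$ is dominated by the $\tfrac12\varepsilon^5$ gain); you identify this honestly and describe the right device, but you do not carry out the accounting, so as written the argument is an accurate sketch of the standard proof rather than a complete one.
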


	\begin{lemma}[Alon, R\"odl, and Ruci\'nski \cite{ARR}]
		Let $G'$ be a bipartite super $(d,\varepsilon)$-regular graph with parts of size $n$ each and let $d > 2 \varepsilon$. Then $G'$ contains a spanning $k$-regular subgraph, where $k = \lceil{(d- 2\varepsilon) n}\rceil$.  
		\label{ARR}
	\end{lemma}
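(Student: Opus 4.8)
The plan is to reduce the existence of a spanning $k$-regular subgraph (a $k$-factor) of $G'$ to a single cut condition, and then verify that condition by using $\varepsilon$-regularity on the ``bulk'' and the minimum-degree part of super-regularity on the ``boundary''. First I would build an integral flow network: add a source $s$ joined to every $x\in X$ by an arc of capacity $k$, a sink $t$ receiving from every $y\in Y$ an arc of capacity $k$, and orient each edge $xy$ of $G'$ from $x$ to $y$ with capacity $1$. Since all capacities are integers, the Max-Flow--Min-Cut theorem produces an integral maximum flow, and a flow of value $kn$ is precisely a $0/1$ choice on the edges making every vertex have degree exactly $k$, i.e.\ a $k$-factor. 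Thus $G'$ has a $k$-factor iff every cut has value at least $kn$. Evaluating the cut determined by a source side $S$ with $A=X\cap S$ and $B=Y\cap S$, its capacity is $k(n-|A|)+\|A,Y\setminus B\|+k|B|$, so the condition to verify is
\[
\|A,Y\setminus B\|\ \ge\ k\bigl(|A|-|B|\bigr)\qquad\text{for all } A\subseteq X,\ B\subseteq Y.
\]
Writing $C=Y\setminus B$, $a=|A|$, $c=|C|$, and noting $|A|-|B|=a+c-n$, it suffices to treat the case $a+c>n$ and show $\|A,C\|\ge k(a+c-n)$.

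\emph{Bulk case: $a\ge\varepsilon n$ and $c\ge\varepsilon n$.} Here $\varepsilon$-regularity gives $d(A,C)\ge d-\varepsilon$, hence $\|A,C\|\ge (d-\varepsilon)ac$, and it remains to check $(d-\varepsilon)ac\ge k(a+c-n)$ on the square $a,c\in[\varepsilon n,n]$. The difference of the two sides is affine in $a$ for fixed $c$ and affine in $c$ for fixed $a$, so its minimum over the square is attained at a corner; evaluating at $(\varepsilon n,\varepsilon n)$, $(\varepsilon n,n)$, $(n,\varepsilon n)$, $(n,n)$ and using $k\le (d-2\varepsilon)n+1$ shows each value is nonnegative, with slack of order $\varepsilon n^2$. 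This is exactly where the $d-2\varepsilon$ (rather than $d$) enters.

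\emph{Boundary case: $a<\varepsilon n$} (the case $c<\varepsilon n$ being symmetric after exchanging the roles of $X$ and $Y$, using the minimum degree on the $C$-side). Now regularity is useless, but super-regularity saves the day: every $x\in A$ has $\cd(x)\ge(d-\varepsilon)n$ and at most $|B|$ neighbours in $B$, so $\|A,C\|\ge a\bigl((d-\varepsilon)n-|B|\bigr)$. Rearranging, the target $\|A,C\|\ge k(a-|B|)$ reduces to $a\bigl((d-\varepsilon)n-k\bigr)\ge |B|(a-k)$, which I would verify from $a<\varepsilon n$, $|B|<a$, and the hypothesis $d>2\varepsilon$: the latter gives $(d-\varepsilon)n>\varepsilon n>a$, so when $a\le k$ the right-hand side is nonpositive while the left-hand side is nonnegative, and when $a>k$ the inequality $(d-\varepsilon)n>a$ yields the claim directly.

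I expect the main obstacle to be the bookkeeping in the boundary case together with pinning down the constants: one must confirm that the minimum-degree estimate dominates $k$ precisely when $d>2\varepsilon$, which is exactly where that hypothesis is needed. A secondary technical point is that the ceiling in $k=\lceil(d-2\varepsilon)n\rceil$ and genuinely small $n$ should not spoil the corner estimates of the bulk case; the slack of order $\varepsilon n^2$ there comfortably absorbs the additive $O(n)$ losses once $n\ge 1/\varepsilon$, leaving only the finite regime $n<1/\varepsilon$ to be checked by hand (or excluded, since in the applications $n$ is large).
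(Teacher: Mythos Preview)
The paper does not prove this lemma at all; it merely quotes it from \cite{ARR} and uses it as a black box. So there is no ``paper's own proof'' to compare against here. Your max-flow/min-cut reduction is the standard route (equivalently, the defect form of Hall's theorem), and it is essentially the argument in the original Alon--R\"odl--Ruci\'nski paper; your case split into a regularity-controlled bulk and a minimum-degree-controlled boundary is exactly how that proof goes.

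Your bookkeeping is sound. Two small remarks. First, in the bulk case you call $(d-\varepsilon)ac-k(a+c-n)$ ``affine in $a$ for fixed $c$''; it is linear (hence affine) in each variable separately, which is what you use, so this is fine. Second, the caveat you flag is genuine: with $k=\lceil (d-2\varepsilon)n\rceil$, the corner estimates need $\varepsilon n\ge 1$, and the lemma as stated in the paper carries no lower bound on $n$. For the paper's purposes this is immaterial (it is applied to the parts of a regularity partition, where $n$ is large), and in the original reference this slack is handled the same way; but if you want a self-contained proof of the lemma exactly as stated you would indeed have to dispose of the finitely many small-$n$ cases separately, or weaken $k$ by $1$.
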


	The following standard lemma shows that an $\varepsilon$-regular pair contains a large super $\varepsilon$-regular pair.
	\begin{lemma}
		Let $(X,Y)$ be a $(d, \varepsilon)$-regular pair in a graph $G$  with $d > 4\varepsilon$ and $|{X}| = |{Y} |= n$. Then, there are sets $X'\subseteq X,~ Y' \subseteq Y$ such that $|{X'}| = |{Y'} |> (1 - \varepsilon) n$ and $(X', Y')$ is a super $3\varepsilon$-regular pair in $G$ with density $d'$ where $d' \geq d- \varepsilon$.
		\label{super}
	\end{lemma}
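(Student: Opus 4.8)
The plan is to run the standard ``cleaning'' argument: first delete from each side the vertices of small degree, then shrink the larger of the two sides so that the sides become equal, and finally check that $\varepsilon$-regularity degrades only to $3\varepsilon$-regularity while the density drops by at most $\varepsilon$. Throughout, degrees and the neighbourhoods $N(\cdot)$ are taken inside $G[X,Y]$, and I will use that $d\le 1$, so the hypothesis $d>4\varepsilon$ forces $\varepsilon<1/4$; in particular $1-\varepsilon\ge\varepsilon$ and $3(1-\varepsilon)\ge 1$, which is all the smallness of $\varepsilon$ that is needed.

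\emph{Cleaning step.} I would set $X_0=\{x\in X:\ |N(x)\cap Y|<(d-\varepsilon)n\}$ and $Y_0=\{y\in Y:\ |N(y)\cap X|<(d-\varepsilon)n\}$. If $|X_0|\ge\varepsilon n$, then $\|X_0,Y\|<(d-\varepsilon)n\,|X_0|$, so $d(X_0,Y)<d-\varepsilon$, contradicting the $\varepsilon$-regularity of $(X,Y)$ applied to the pair $(X_0,Y)$; hence $|X_0|<\varepsilon n$, and symmetrically $|Y_0|<\varepsilon n$. Put $\tilde X=X\setminus X_0$, $\tilde Y=Y\setminus Y_0$, so $|\tilde X|,|\tilde Y|>(1-\varepsilon)n$. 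Assuming without loss of generality that $|\tilde X|\ge|\tilde Y|$, let $Y':=\tilde Y$ and let $X'\subseteq\tilde X$ be any set with $|X'|=|\tilde Y|$; then $|X'|=|Y'|>(1-\varepsilon)n$. The one computation that deserves attention: since $|\tilde X|-|\tilde Y|=|Y_0|-|X_0|$, we get $|X\setminus X'|=|X_0|+(|\tilde X|-|\tilde Y|)=|Y_0|<\varepsilon n$ and $|Y\setminus Y'|=|Y_0|<\varepsilon n$, i.e.\ fewer than $\varepsilon n$ vertices have been discarded from each of $X$ and $Y$.

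\emph{Verifying the three conclusions.} Since $|X'|,|Y'|>(1-\varepsilon)n\ge\varepsilon n$, $\varepsilon$-regularity of $(X,Y)$ gives $|d-d(X',Y')|\le\varepsilon$; writing $d':=d(X',Y')$ this yields $d'\ge d-\varepsilon$ (and also $d'\le d+\varepsilon$). For $3\varepsilon$-regularity, take $A\subseteq X'$, $B\subseteq Y'$ with $|A|\ge 3\varepsilon|X'|$ and $|B|\ge 3\varepsilon|Y'|$; then $|A|\ge 3\varepsilon(1-\varepsilon)n\ge\varepsilon n$ and likewise $|B|\ge\varepsilon n$, so $|d-d(A,B)|\le\varepsilon$, hence $|d'-d(A,B)|\le|d'-d|+|d-d(A,B)|\le 2\varepsilon<3\varepsilon$. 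For super-regularity, every $x\in X'\subseteq\tilde X$ has $|N(x)\cap Y'|\ge|N(x)\cap Y|-|Y\setminus Y'|>(d-\varepsilon)n-\varepsilon n=(d-2\varepsilon)n\ge(d'-3\varepsilon)n\ge(d'-3\varepsilon)|X'|$, using $d'\le d+\varepsilon$ and $|X'|\le n$ (and the inequality is trivial if $d'-3\varepsilon<0$); the symmetric bound $|N(y)\cap X'|\ge(d'-3\varepsilon)|Y'|$ for $y\in Y'\subseteq\tilde Y$ follows in the same way because $|X\setminus X'|<\varepsilon n$ too. Thus $(X',Y')$ is a super $3\varepsilon$-regular pair of density $d'\ge d-\varepsilon$.

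\emph{Where the difficulty sits.} The statement is routine, and the only place one can slip is the bookkeeping in the cleaning step. If instead one symmetrized by trimming \emph{both} $\tilde X$ and $\tilde Y$ down to a common size, one could delete up to $2\varepsilon n$ vertices from one side, and then the final chain $|N(y)\cap X'|\ge(d'-3\varepsilon)|Y'|$ would fail by an additive $\varepsilon n$. Removing the surplus only from whichever of $\tilde X,\tilde Y$ is larger keeps the total loss below $\varepsilon n$ on \emph{each} side, which is precisely the slack that the super-regularity estimate consumes.
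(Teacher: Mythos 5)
Your proof is correct and follows essentially the same route as the paper's: delete the vertices of degree below $(d-\varepsilon)n$ from each side, note by $\varepsilon$-regularity that fewer than $\varepsilon n$ are deleted, equalize the sides, and then verify density, $3\varepsilon$-regularity via the triangle inequality, and the minimum-degree condition. (Your closing caveat is not actually needed: since both $\tilde X$ and $\tilde Y$ exceed $(1-\varepsilon)n$, trimming both to their common minimum size --- as the paper does --- still removes fewer than $\varepsilon n$ vertices from each side.)
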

	\begin{proof}
		Let 
		\begin{eqnarray*}
		\tilde{X} &= & \{x \in X \colon |{N(x) \cap Y}| \geq (d- \varepsilon) |{Y}|\} \mbox{ and}\\ 
\tilde{Y}&=&\{y \in Y \colon |{N(y) \cap X}| \geq (d- \varepsilon) |{X}|\}.
\end{eqnarray*}

		Note that $|{\tilde{X}}| > (1 - \varepsilon)|{X}|$. 
		Otherwise, let $X_2= X\setminus \tilde{X}$  and observe that any vertex in $X_2$ has less than $(d-\varepsilon)$ neighbours in $Y$, thus 
		$d(X_2, Y) <(d-\epsilon)$, a contradiction to $\varepsilon$-regularity since $|X_2|\geq \varepsilon n$.
		A similar argument holds for $\tilde{Y} $.\\
		
Let $X' \subseteq \tilde{X},$ $ Y' \subseteq \tilde{Y}$ such that $|X'| = |Y'| = \min \{|\tilde{X}|, |\tilde{Y}|\}$. Then $|X'|=|Y'|=n' > (1 - \varepsilon) |X|= (1-\varepsilon )n.$
Note that for $\varepsilon <1/2$ we have $n'>\varepsilon n$. We shall show that $(X', Y')$ satisfies the minimum degree and regularity conditions of a super-regular pair.\\

		Let $d' = d(X', Y')$. By $\varepsilon$-regularity of $(X,Y)$, we have $d' \leq d + \varepsilon.$  Moreover, from the definition of $X'$ we have  $\delta(X', Y') \geq (d - \varepsilon) n - \varepsilon n = (d - 2 \varepsilon) n \geq (d' - 3 \varepsilon) n \geq (d' - 3 \varepsilon) n'.$ 
Now, consider $A \subseteq X', B \subseteq Y',$ such that $ |A| \geq 3\varepsilon  |X'| $ and $|B| \geq 3\varepsilon |{Y'}|$. Observe that 
	$|A| \geq 3 \varepsilon |X'|  > 3 \varepsilon (1- \varepsilon) n >  \varepsilon n.$  Similarly,  $|B|>\varepsilon n$. Then, by $\varepsilon$-regularity of $(X,Y)$,  we obtain  that
$$|{d(X', Y') - d(A, B)}| \leq |{d(X', Y') - d(X,Y)}| + |{d(A,B) - d(X,Y)}|\leq 2 \varepsilon \leq 3 \varepsilon.$$
		Thus, $(X',Y')$ is a $3\varepsilon$-regular pair with density $d- \varepsilon \leq d' \leq d+ \varepsilon$ and minimum degree $\delta(X', Y') \geq (d' - 3 \varepsilon) n $.
	\end{proof}

	\begin{theorem}
		For every $\gamma$, $1 / 2 > \gamma > 0$, there exists $M \in \N$ such that every graph $G$ contains a subgraph $G'$ with $\theta(G') \leq M$ and 
		$\| {G}\|  - \| {G'}\|  \leq \gamma |{G}|^2$. 
		\label{gamma}
	\end{theorem}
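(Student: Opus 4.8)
The plan is to apply Szemerédi's Regularity Lemma to $G$ with a suitably small regularity parameter $\varepsilon$ (to be chosen in terms of $\gamma$) and a sufficiently large lower bound $m$ on the number of parts, obtaining a regular partition $V_0 \cup V_1 \cup \cdots \cup V_k$ with $m \le k \le M$ for some $M = M(\varepsilon, m)$. We then form the subgraph $G'$ by keeping, for each $\varepsilon$-regular pair $(V_i, V_j)$ of density exceeding some threshold $\tau$ (say $\tau = 4\varepsilon$ or a bit larger), a spanning regular subgraph, and discarding all other edges. Since every regular bipartite graph is interval colorable (it decomposes into perfect matchings by König's theorem), each such piece has interval thickness $1$; taking the edge-decomposition of $G'$ into these at most $\binom{k}{2} \le \binom{M}{2}$ pieces gives $\theta(G') \le \binom{M}{2} =: M'$, a bound depending only on $\gamma$.

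The key steps, in order: (1) Fix $\varepsilon$ small (I expect something like $\varepsilon \le \gamma/10$ will suffice after the bookkeeping) and $m$ large, run Theorem~\ref{thm:szemeredi}. (2) Bound the edges that are \emph{not} retained. These fall into three categories: edges incident to $V_0$, which number at most $|V_0| \cdot |G| \le \varepsilon |G|^2$; edges inside the $V_i$'s or between irregular pairs, numbering at most $k \binom{|V_1|}{2} + \varepsilon k^2 |V_1|^2 \le O(|G|^2/k) + \varepsilon |G|^2$; and edges in regular pairs of density below $\tau$, numbering at most $\binom{k}{2} \tau |V_1|^2 \le \tau |G|^2/2$. (3) For each regular pair $(V_i,V_j)$ of density $d \ge \tau > 4\varepsilon$, apply Lemma~\ref{super} to extract a super $3\varepsilon$-regular pair $(X',Y')$ on more than $(1-\varepsilon)|V_1|$ vertices each with density $d' \ge d - \varepsilon > 2(3\varepsilon)$, then apply Lemma~\ref{ARR} to find inside it a spanning $r$-regular subgraph with $r = \lceil (d' - 6\varepsilon)|X'|\rceil \ge 1$; discarding the at most $2\varepsilon|V_1|$ leftover vertices of that pair costs at most $2\varepsilon |V_1| \cdot |V_1|$ further edges, and the edges lost \emph{inside} $(X',Y')$ (those above the $r$-regular subgraph) number at most $(d' - r)|X'|^2 \le (6\varepsilon + 1/|X'|)|V_1|^2$. (4) Sum all the discarded edges over all pairs: the per-pair $\varepsilon$-order losses contribute $O(\varepsilon)\binom{k}{2}|V_1|^2 = O(\varepsilon)|G|^2$, and choosing $\varepsilon$, $\tau$, and $m$ appropriately makes the total at most $\gamma|G|^2$. (5) Conclude $\theta(G') \le \binom{M}{2}$.

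The main obstacle is the bookkeeping in step (3)–(4): one must make sure that the \emph{linear-in-$|V_1|$} error terms coming from $\lceil \cdot \rceil$ in Lemma~\ref{ARR} and from the discarded vertices, when multiplied by $|V_1| \approx |G|/k$ and summed over $\binom{k}{2}$ pairs, do not exceed $\gamma|G|^2$ — this is why $m$ (hence $k$) must be taken large, so that the $|V_1|/|G| = O(1/k)$ factor tames these terms, while simultaneously $\varepsilon$ must be small enough that the $O(\varepsilon)|G|^2$ terms are absorbed, and $\tau$ small enough that $\tau|G|^2/2$ is absorbed — all three choices can be made compatibly since they do not interact circularly ($\varepsilon$ and $\tau$ chosen first from $\gamma$, then $m$ large, then $M$ from the Regularity Lemma). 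A minor point to check is that $r \ge 1$, i.e. that the retained regular subgraphs are nonempty, which follows from $d' - 6\varepsilon \ge \tau - 7\varepsilon > 0$ for the chosen constants; pairs where this fails can simply be lumped into the discarded low-density category.
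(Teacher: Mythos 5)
Your proposal is correct and follows essentially the same route as the paper: Szemer\'edi's Regularity Lemma, passing to super-regular sub-pairs via Lemma~\ref{super}, extracting spanning regular (hence interval colorable) bipartite subgraphs via Lemma~\ref{ARR}, and the same bookkeeping of discarded edges. The only cosmetic difference is that the paper packs the $\binom{k}{2}$ regular pieces into $k$ interval colorable graphs using a proper edge colouring of $K_k$ (vertex-disjoint unions of interval colorable graphs being interval colorable), whereas you bound $\theta(G')$ by $\binom{M}{2}$ directly; both give a constant depending only on $\gamma$, which is all the theorem asks.
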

	\begin{proof}
		Let $1 / 2 > \gamma > 0$ be arbitrary. Choose $\varepsilon > 0$ sufficiently small and $m \in \N$ sufficiently large such that  $\frac{1}{2m} + \left(11+ \frac{\varepsilon}{2}\right) \varepsilon \leq \gamma.$
		By Szemer\'edi's Regularity Lemma, see Theorem \ref{thm:szemeredi}, there exists $M \in \N$ such that every graph of order at least $m$ has an $\varepsilon$-regular partition $V_0 \cup \dots \cup V_k$ with $m \leq k \leq M$. Now, let $G$ be a graph of order $n \in \N$. If $n < m$, we have that 
		$\theta(G) \leq n \leq M$ using an upper bound in Theorem \ref{basic}. Thus, we may assume that $n \geq m$. By our choice of $M$, we know that $G$ has an $\varepsilon$-regular partition $V_0 \cup \cdots \cup V_k$ with $m \leq k \leq M$. Let $\ell = |{V_1}|$.  We shall define a subgraph $G'$ of $G$ corresponding to regular pairs of sufficiently high density. \\
	
	For each pair $(V_i,V_j)$, $1 \leq i < j \leq k$, we shall define a graph $G_{i,j}$. Let $d_{i,j}$ be the  density of $(V_i, V_j)$.  
	If $d_{i,j} >  7 \varepsilon$  and $(V_i, V_j)$ is an $\varepsilon$-regular pair,  consider $G[V_i,V_j]$ and apply Lemmas \ref{ARR} and \ref{super} to it.  
	Let $G_{i,j}$ be a subgraph of $G[V_i,V_j]$ that is $q_{i,j}$-regular on at least $2(1-\varepsilon)\ell$ vertices and  $q_{i,j} \geq (d_{i,j} - \varepsilon - 2(3\varepsilon))n$. 
Note that since $G_{i,j}$ is bipartite and regular, it is interval colorable. Note also that $G_{i,j}$ contains most of the edges of $G[V_i, V_j]$. We shall make this statement more precise below.
If a pair  $(V_i,V_j)$ is not $\varepsilon$-regular or has density at most $7\varepsilon$, let $G_{i,j}$ be an empty graph. Let  $$G' = \bigcup_{1 \leq i < j \leq k} G_{i,j}. $$

 We shall show first that $\theta(G') \leq M$. For that let $c$ be a proper edge coloring of a complete graph with vertex set $\{1, \ldots, k\}$  using colors from $\{1, \ldots, k\}$ and let, for $s \in \{1, \ldots, k\}$,
			$$G_s = \bigcup_{1 \leq i < j \leq k,~ c(ij) = s} G_{i,j}.$$
		 
		 Since  $G_s$ is the vertex-disjoint union of interval colorable graphs,  $G_s$ is itself interval colorable, for all $s\in \{1, \ldots, k\}$.    
		Since $G'= \bigcup_{1\leq s \leq k} G_s$,  we have that $\theta(G') \leq k \leq M$.\\

		We will bound the number of edges from  $G$  that are not in $G'$. We call a pair $(V_i, V_j)$, $i\neq j$,  nontrivial if $i\neq 0$ and $j\neq 0$.  We have that $\| G\| -\| G'\| = x_1+x_2+x_3+x_4$, where
		\begin{itemize}
			\setlength\itemsep{-0.25em}
			\item $x_1$ is the number of edges in non-$\varepsilon$-regular pairs or with exactly one endpoint in $V_0$, 
			\item $x_2$ is the number of edges induced by $V_i$'s for $0 \leq i \leq k$,
			\item $x_3$ is the number of edges in nontrivial $\varepsilon$-regular pairs with density at most $7 \varepsilon$, and
			\item $x_4$ is the number of edges in nontrivial $\varepsilon$-regular pairs with density greater than $7 \varepsilon$ that are not in $G'$.  
		\end{itemize}
			
		 Note that $\ell=|V_1|=\cdots=|V_k|\leq n/k$ and $|V_0|\leq \varepsilon n$. Moreover, the maximum number of edges in $G[V_i, V_j]$ is at most $(n/k)^2$, for $1 \leq i<j\leq k$.
		Since there are at most $\varepsilon k^2$ nontrivial  pairs that  are non-$\varepsilon$-regular and at most $|V_0\| V(G)-V_0| \leq \varepsilon(1-\varepsilon) n^2$ edges with exactly one endpoint in $V_0$,  we have 
			$$x_1 \leq \varepsilon k^2 \cdot \left({\frac{n}{k}}\right)^2 + \varepsilon n (n - \varepsilon n ) \leq 2 \varepsilon n^2.$$
In addition, $$x_2 \leq \binom{\varepsilon n}{2} + k \binom{n /k}{2} \leq \frac{(\varepsilon n)^2}{2} + \frac{n^2}{2 k}$$ and $$x_3 \leq \binom{k}{2} 7 \varepsilon (n / k)^2 \leq \frac{7}{2} \varepsilon  n^2.$$
Finally, for $x_4$, note that for a pair with parts of size $\ell$ each and with density $d > 7 \varepsilon$, the number of edges that are not in $G'$ is at most 
		$d \cdot \ell^2 - (d - 7 \varepsilon) \cdot (\ell(1 - \varepsilon))^2 \leq  10 \varepsilon l^2 \leq 10 \varepsilon \left({\frac{n}{k}}\right)^2$.
		Thus  $$x_4 \leq \binom{k}{2} 10 \varepsilon (n / k)^2 \leq 5 \varepsilon n^2.$$

		Therefore, 
		\begin{eqnarray*}
			\| G\| -\| G'\|  &= & x_1 + x_2 + x_3 + x_4 \\
			&\leq & \left(2 \varepsilon + \frac{\varepsilon^2 }{2} + \frac{1}{2k} + \frac{7}{2} \varepsilon + 5\varepsilon \right)n^2 \\
			&\leq &\left(\frac{1}{2m} + 11\varepsilon + \frac{\varepsilon^2}{2} \right) n^2 \\
			&\leq &\gamma n^2.
		\end{eqnarray*}
		This concludes the proof.
	\end{proof}


                   \section{Construction of a graph with a given interval spectrum}	\label{constructions}


\subsection{Construction and properties of the graph $F(b,T)$}

\begin{figure}[h!]
	\centering
	\includegraphics[width = 0.7 \textwidth]{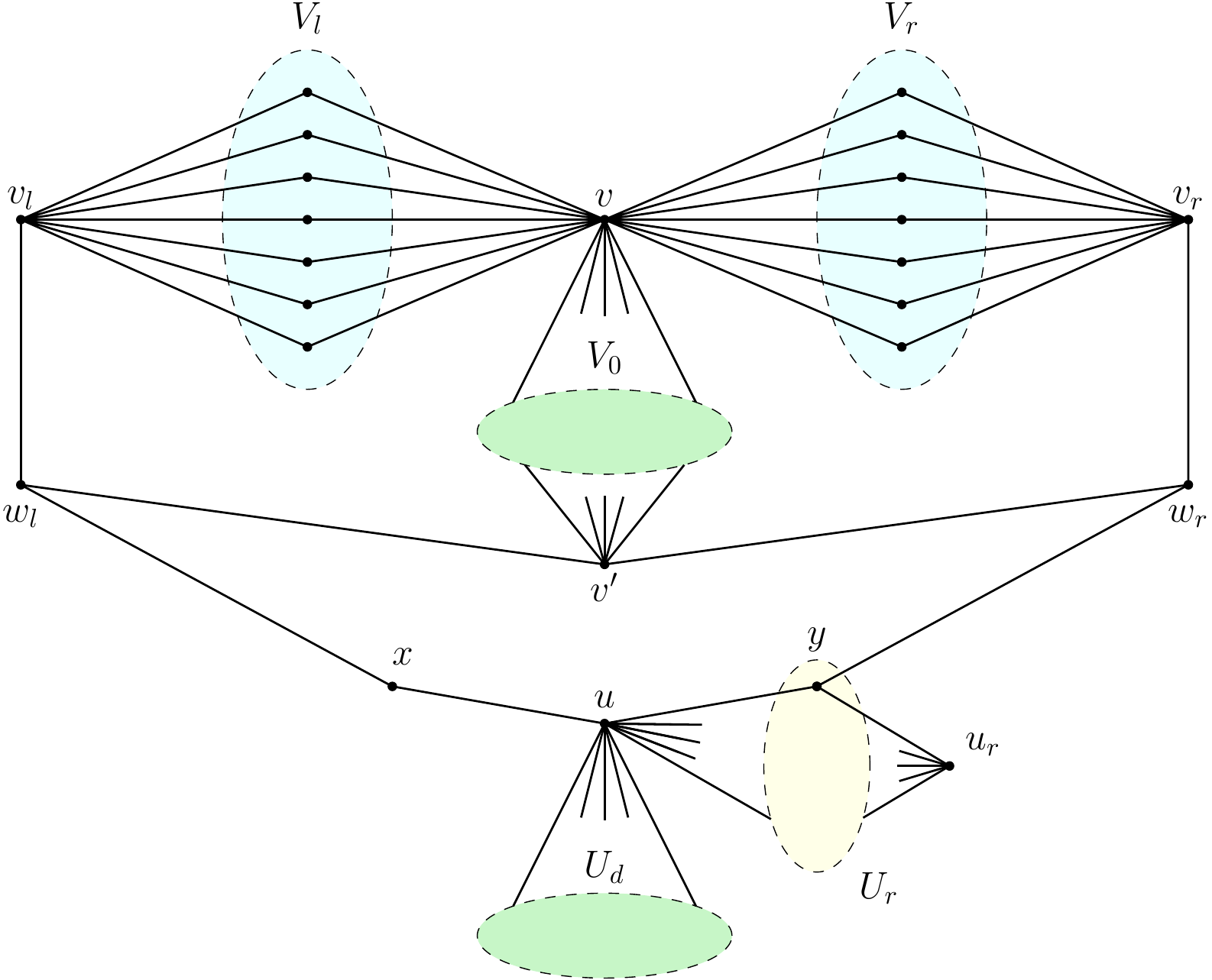}
	\caption{Graph $F(b,T)$}
	\label{fig:F(b,D)}
\end{figure}

	For positive integers $b, D$, where $ b\leq D$, let  $T=D+25$ and the graph $F=F(b,T)$ be formed by a union of five complete bipartite graphs with pairs of parts 
	$(\{v, v'\}, V_0)$, $(\{v, v_r\}, V_r)$, $(\{v, v_l\}, V_l)$, $(\{u, u_r\}, U_r)$, $(\{u\}, U_d)$, as well as additional vertices $w_l, w_r$ and edges $w_lv', $ $w_rv', $ $w_lv_l, $ $w_rv_r, $ $w_lx,$ $ w_ry, $ $xu$, where $y\in U_r$. Here the vertices $x,$ $ v, v', $ $v_l, $ $v_r,$ $ u, $ $u_r, $ $w_l, $ and $w_r$ are distinct and not contained in any of the pairwise disjoint sets $V_0, V_l, V_r, U_l, U_r, $ and $U_d$. 
Moreover $|V_0|= D+12$, $|V_l|=|V_r|=7$, $|U_r|=D-b+2$, and $|U_d|=b$. We refer to the edges incident to $U_d$ as {\it pendant}. See Figure \ref{fig:F(b,D)}.

As a part of a larger construction, Sevastianov \cite{S} proved the interval coloring properties of $F$.  Here we include the arguments for completeness.
The following lemma claims that the interval colorings of $F$ are very rigid. Depending on the smallest label used on $F$, the colors of certain edges are fixed. For integers $q, i, j$, $i\leq j$, we shall denote the interval of integers  $\{q+ i, q+i+1, \ldots,  q+j\} $ as $q+[i,j]$ and call it a {\it shift} of an interval $[i,j]$.  Moreover, for $i\leq j$ let $-[i,j]= [-j,-i]$.\\

\begin{lemma}[Sevastianov \cite{S}]\label{s}
For any positive integers $ b $ and $D$, where $b\leq D$ and $D$ is even,  the graph $F=F(b, T)$ is planar, bipartite, and interval colorable  for $T=D+25$. Moreover, for any interval coloring $c$ of $F$ the following properties hold:
\begin{enumerate}
\item{} $c(F)=c_1+ [0,T]$, for some integer $c_1$,\\
\item{} $c(w_lv_l)\in \{c_1+8, c_1+T-8\}$,\\
\item{}  if $c(w_lv_l)=c_1+8$, then the set of colors on the pendant edges is $c_1+11+[1, b]$; 
     if $c(w_lv_l)=c_1+T-8$, then set of colors on the pendant edges is $c_1+T-11-[1,  b].$  
\end{enumerate}      
\end{lemma}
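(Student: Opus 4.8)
The plan is to analyze an arbitrary interval coloring $c$ of $F=F(b,T)$ by tracking how the rigid structure of the five complete bipartite pieces forces the colors on the ``spine'' vertices $v, v', v_l, v_r, u, u_r$ and then propagates to the pendant edges. First I would establish that $F$ is planar and bipartite by inspection of the construction (the five complete bipartite graphs each have a part of size $2$ or $1$, and the extra vertices and edges are easy to place), and that it is interval colorable by exhibiting one explicit coloring, which also shows $T=D+25\in S(F)$ and pins down the count $\|F\|$ and the degrees. Since $F$ is connected, Observation-type facts from the excerpt give that $c(F)$ is an interval, so $c(F)=c_1+[0,T]$ for $c_1=\min c(F)$; that is part (1). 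The key degree computation is that $\cd(v)=|V_0|+|V_r|+|V_l|+1 = (D+12)+7+7+1 = D+27$ wait — I would recompute carefully: $v$ is joined to all of $V_0$, all of $V_r$, all of $V_l$, and the only extra edge at $v$ is $vv'$? No: $v$ appears in the pairs $(\{v,v'\},V_0)$, $(\{v,v_r\},V_r)$, $(\{v,v_l\},V_l)$, so $\cd(v)=|V_0|+|V_r|+|V_l| = (D+12)+7+7 = D+26 = T+1$. Hence the interval of colors at $v$ is \emph{all} of $c(F)=c_1+[0,T]$, so $v$ sees every color. Similarly I would compute $\cd(u)=|U_r|+|U_d|+1$ (the $+1$ from edge $xu$) $= (D-b+2)+b+1 = D+3$, and $\cd(v')=|V_0|+2$ (edges to $V_0$, plus $w_lv'$ and $w_rv'$) $= D+14$, and $\cd(w_l)=\cd(w_r)=3$ (each $w_l$ joins $v'$, $v_l$, and $x$; each $w_r$ joins $v'$, $v_r$, and some $y\in U_r$).

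The heart of the argument is a sequence of local deductions. Since $v$ is incident to every color in $c_1+[0,T]$ and $v'$ shares the whole set $V_0$ with $v$, the colors on $v$--$V_0$ and $v'$--$V_0$ edges must be ``complementary'' intervals within $c_1+[0,T]$: the $D+12$ colors at $v$ going to $V_0$ form a sub-interval, and the remaining colors $c_1, c_1+T$ must be used by the edges $vv_r, vv_l$ (and the colors at $v'$ not going to $V_0$ are exactly $c(w_lv'), c(w_rv')$). Pushing this through, one shows the interval at $v'$ is forced to be either $c_1+[0,D+13]$ or $c_1+[T-D-13,T]$, i.e.\ $v'$ sits at one ``end'' of the spectrum, and by the symmetry $w_l\leftrightarrow w_r$ one may assume WLOG $c(w_lv') \in\{c_1,c_1+1\}$ region. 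Then $\cd(w_l)=3$ forces $\{c(w_lv'),c(w_lv_l),c(w_lx)\}$ to be three consecutive integers; combined with the already-forced small interval at $v_l$ (which comes from $v_l$ being joined to all of $V_l$ and to $w_l$, together with $v$ seeing $c(vv_l)\in\{c_1,c_1+T\}$), a short case check yields $c(w_lv_l)\in\{c_1+8,\,c_1+T-8\}$, giving part (2). Finally, in the case $c(w_lv_l)=c_1+8$, one traces: this fixes the interval at $v_l$, hence at $w_l$, hence $c(w_lx)$, hence (with $\cd(x)=3$, $x$ joined to $w_l$, $w_r$, $u$) the color $c(xu)$; that color together with $v$ forcing the extreme colors and $\cd(u)=D+3$ determines the interval at $u$, and since $U_d$-edges occupy one contiguous block of that interval adjacent to the block used by $U_r$-edges, the pendant colors come out to $c_1+11+[1,b]$; the other case is the mirror image, giving part (3).

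The main obstacle I expect is the bookkeeping in pinning down the interval at $v'$ and then at $v_l, v_r$: one has to argue that the $D+12$ colors that $v$ uses on edges to $V_0$ form a \emph{contiguous} block within $c_1+[0,T]$ that, crucially, does \emph{not} contain both endpoints $c_1$ and $c_1+T$ (else $vv_r$ and $vv_l$ could not be colored), and then that $v'$, seeing exactly those same $D+12$ colors plus two more consecutive-to-the-block colors, is squeezed to an end interval. This requires carefully using $|V_0|=D+12$ versus $T+1 = D+26$ so that the ``gap'' left at $v$ has size exactly $14$, into which $v', v_l, v_r$-related edges and the $w$'s must fit; getting the constant $8$ in part (2) right is exactly where the sizes $|V_l|=|V_r|=7$ and the single extra edges $w_lv_l$, $w_lv'$ enter. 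Everything else is either a direct explicit-coloring check or a $\cd=3$ vertex forcing three consecutive colors, which is routine. I would also note that the hypothesis ``$D$ even'' is used only to make the explicit interval colorings line up integrally (so that the complementary blocks split evenly), and can be invoked at the single point where parity matters.
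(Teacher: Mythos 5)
Your overall strategy---an explicit coloring for existence plus degree-based local forcing for rigidity---is the same as the paper's, which organizes the forcing around the observation that two labels at a vertex $z$ differ by less than $\cd(z)$ and then applies it along two paths whose chains of inequalities are forced to be tight. However, your sketch has several concrete errors and one genuine gap. First, the structural slips: $vv_l$ and $vv_r$ are not edges of $F$ (the vertices $v_l, v_r$ are the \emph{co-parents} of $V_l, V_r$ with $v$, so the extremal colors $c_1$ and $c_1+T$ at $v$ sit on edges from $v$ into $V_l$ and $V_r$, not on edges to $v_l,v_r$); the vertex $x$ has degree $2$ (neighbours $w_l$ and $u$ only---it is $y\in U_r$, not $x$, that is adjacent to $w_r$); and your claimed forced interval at $v'$, namely $c_1+[0,D+13]$ or $c_1+[T-D-13,T]$, is false---already in the explicit coloring the interval at $v'$ is $c_1+[6,D+19]$, roughly centred rather than at an end of the spectrum. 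Since your route to part (2) runs through that false claim about $v'$ and the subsequent ``WLOG $c(w_lv')$ near $c_1$'', the case check you describe does not go through as stated.

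The more substantive gap is that your propagation only yields one-sided bounds. Pushing from the colour-$c_1$ edge in $V_l$ through its degree-$2$ endpoint and through $v_l$ (degree $8$) gives $c(w_lv_l)\le c_1+8$, but nothing in your sketch forces equality, nor fixes $c(w_lx)$, $c(xu)$, or the interval at $u$. The missing idea is the matching lower bound: the paper first shows (via a length-$4$ path through $v'$ and the degree bound $\cd(v')=D+14$) that the two extremal edges at $v$ must go to $V_l$ and $V_r$ rather than to $V_0$, and then takes the length-$8$ path from the colour-$c_1$ edge's endpoint to the colour-$(c_1+T)$ edge's endpoint through $v_l, w_l, x, u, y, w_r, v_r$; the cumulative degree bounds along this path give label upper bounds $c_1+1,\ c_1+8,\ c_1+10,\ c_1+11,\ c_1+13+D,\ c_1+15+D,\ c_1+17+D,\ c_1+24+D$ while the last edge must carry label at least $c_1+24+D$, so every inequality is tight and all eight labels are determined. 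That single tightness argument is what pins $c(w_lv_l)=c_1+8$, fixes $c(xu)$ and $c(uy)$, and hence locates the pendant block for part (3); without it (or an equivalent two-sided squeeze) parts (2) and (3) are not established.
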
	

\begin{remark}
	The lemma implies that in any interval coloring of $F=F(b, T)$ the number of colors used is $T+1$, the colors of the pendant edges form an interval of $b$ numbers either starting with the first $13$th number used on $F$ or ending with the last $13$th number used on $F$. For example, if $T=37$, $b=2$, and for an interval coloring $c$ of $F$, $c(F)=\{3, \ldots, 40\}$, then the set of  colors of the pendant edges is either $\{15, 16\}$  or $\{27, 28\}$.   
\end{remark}

\begin{proof}
To see that $F$ is interval colorable, one can give an explicit coloring $c$ as follows. We denote by $c(U,z)$ a set of colors on edges incident to a set of vertices $U$ and a vertex $z$.  
Let  $c(v_lw_l)=9$, $c(w_lv')=10$, $c(w_lx)=11$, $c(xu)=12$, $c(uy)=D+14$, $c(yu_r)= D + 15$, $c(yw_r)=D+16$, $c(v'w_r)=D+17$, $c(v_rw_r)=D+18$, and
\begin{eqnarray*}
c(U_d, u) &= &\{13, 14,  \ldots, 12+b\},\\
c(V_l,v)&=&\{1, \ldots, 7\}, \\c(V_l,v_l)&=& \{2, \ldots, 8\},  \\
c(V_r,v)&=& \{D+26, D+25, \ldots, D+20\}, \\c(V_r,v_r)&= &\{D+25, D+24, \ldots, D+19 \}, \\
c(V_0, v)&= &\{8, 9, 10,  ~ 11, 12, \ldots,  D+15, D+16, ~  D+17, D+18,  D+19 \},\\
c(V_0,v')&=& \{7, 8, 9,~ ~ \, 12, 11,  \ldots,  D+16,  D+15, ~D+18, D+19, D+20 \}, \\  
c(U_r, u)&= & \{D+14, D+13, \ldots, 13+b\},\\  c(U_r,u_r)&=& \{D+15, D+14, \ldots, 14+b \}.
\end{eqnarray*}  
Note that for this iterative pattern at $V_0$, one needs $D$  to be even.\\~\\

The main idea of the remaining proof is an observation that  in an interval coloring of a graph the difference between the labels on two edges incident to a vertex $z$  is less than the degree of  $z$.
Note that the degree of the vertex $v$ in $F$ is $D+26$. Assume first that there is an edge $e$ labeled $1$ incident to $v$ and that $1$ is the smallest label at $v$. Then there is an edge $e'$ incident to $v$ and labeled $D+26$.\\

\begin{claim*}
	Either $e$ is incident to $V_l$ and $e'$ is incident to $V_r$ or $e'$ is incident to $V_l$ and $e$ is incident to $V_r$.
\end{claim*}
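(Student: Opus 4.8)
The plan is to analyse only the colours appearing on the edges incident to $v$. Since $v$ has degree $D+26$ in $F$ and $1$ is the smallest colour at $v$, the set of colours at $v$ is exactly $\{1,\dots,D+26\}$. Write $S_0$, $S_l$, $S_r$ for the sets of colours used on the edges joining $v$ to $V_0$, $V_l$, $V_r$ respectively; then $S_0$, $S_l$, $S_r$ partition $\{1,\dots,D+26\}$ and $|S_0|=D+12$, $|S_l|=|S_r|=7$. The claim is equivalent to: neither $1$ nor $D+26$ lies in $S_0$, and $\{1,D+26\}$ is contained in neither $S_l$ nor $S_r$. Indeed, once this is known, $1$ and $D+26$ both lie in $S_l\cup S_r$ and, not both lying in the same set, one lies in $S_l$ and the other in $S_r$, which is precisely the assertion.

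The first step is to record the clustering forced on $S_0$, $S_l$, $S_r$ by the low-degree neighbours of $v$. Every vertex of $V_0$ has degree $2$ with neighbours $v$ and $v'$, so its two incident colours are consecutive integers; since the colours at $v'$ form a set $I'$ of $D+14$ consecutive integers, it follows that $S_0$ is contained in the set of $D+16$ consecutive integers obtained by adjoining one integer at each end of $I'$. Likewise $v_l$ and $v_r$ have degree $8$, with colour sets $J_l$ and $J_r$ consisting of $8$ consecutive integers, so $S_l$ and $S_r$ are each contained in a set of at most $10$ consecutive integers. In particular $\max S_0-\min S_0\le D+15$, while $\max S_l-\min S_l\le 9$ and $\max S_r-\min S_r\le 9$. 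Since $(D+26)-1=D+25$ exceeds $D+15$ and exceeds $9$, no two of the numbers $1,D+26$ can lie in the same one of $S_0$, $S_l$, $S_r$; so it remains only to prove $1\notin S_0$ and $D+26\notin S_0$.

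To rule out $1\in S_0$, suppose $c(vz_0)=1$ with $z_0\in V_0$. Then $c(v'z_0)\in\{0,2\}$, so $\min I'\le 2$ and hence $\max I'\le D+15$ (as $|I'|=D+14$); in particular $c(w_lv')\le D+15$. As $w_l$ has degree $3$, the colour $c(w_lv_l)$ differs from $c(w_lv')$ by at most $2$, so $c(w_lv_l)\le D+17$; since $c(w_lv_l)\in J_l$ and $|J_l|=8$, this gives $\max J_l\le D+24$, whence $\max S_l\le D+25$ and $D+26\notin S_l$. The same argument along $w_r$ and $v_r$ gives $D+26\notin S_r$. But $1\in S_0$ forces $D+26\notin S_0$ by the spread bound above, so $D+26$ lies in none of $S_0$, $S_l$, $S_r$ -- contradicting $D+26\in\{1,\dots,D+26\}$. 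Hence $1\notin S_0$. A mirror argument handles $D+26$: if $c(vz_1)=D+26$ with $z_1\in V_0$, then $c(v'z_1)\in\{D+25,D+27\}$, so $\max I'\ge D+25$ and $\min I'\ge 12$; thus $c(w_lv'),c(w_rv')\ge 12$, then $c(w_lv_l),c(w_rv_r)\ge 10$, then $\min J_l,\min J_r\ge 3$, so $\min S_l,\min S_r\ge 2$ and $1\notin S_l\cup S_r$; together with $1\notin S_0$ (again by the spread bound, since $D+26\in S_0$) this contradicts $1\in\{1,\dots,D+26\}$. Therefore $1,D+26\notin S_0$ and the claim follows.

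The only delicate point is the propagation in the third paragraph: a bound must travel from a degree-$2$ vertex of $V_0$ to $v'$, then through the degree-$3$ vertex $w_l$ to $v_l$, and finally from $v_l$ through a degree-$2$ vertex of $V_l$ back to $v$, and the interval arithmetic has to stay tight enough that the resulting estimate on $\max S_l$ (respectively $\min S_l$) still excludes the extreme colour $D+26$ (respectively $1$). Everything else is routine bookkeeping with the fixed degrees of $v$, $v'$, $v_l$, $v_r$, $w_l$, $w_r$, and no use of planarity, of the side of $F$ containing $u$, or of the parameter $b$ is needed.
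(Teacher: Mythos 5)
Your proof is correct and rests on the same mechanism as the paper's: in an interval coloring, labels can only drift slowly along the path through a degree-$2$ vertex of $V_0$, then $v'$, the degree-$3$ vertex $w_l$ (or $w_r$), $v_l$ (or $v_r$), and a degree-$2$ vertex of $V_l$ (or $V_r$), and the numerical bounds you propagate are exactly those the paper propagates along its path $P$. The only difference is organizational --- you package the estimates as spread bounds on $S_0, S_l, S_r$ and a pigeonhole on where $1$ and $D+26$ can sit, whereas the paper runs the same estimates as a case analysis on the locations of $e$ and $e'$; both are fine.
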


To prove the claim, note first that  $e$ and $e'$ can not both be incident to the same set $V_0, V_l$, or $V_r$.  Indeed, otherwise we consider two edges $e_1$ and $e_1'$ adjacent to $e$ and $e'$ respectively and incident to a vertex $r$ that is  $v', v_l$, or $v_r$, respectively.  Then the labels of $e_1$ and $e_1'$ are at most $2$ and at least $D+25$, respectively, contradicting the fact that the degree of $r\leq D+14$.
  
Now,  assume  that $e$ is incident to $V_0$ and  $e'$ is incident to $V_l$.  Consider a shortest path $P$ joining non-common endpoints of $e'$ and $e$ and avoiding $v$.  It has length $4$ and passes through vertices of degrees $2, 8, 3, D+14, $ and $2$, respectively. The respective labels on the edges of $P$ are at least $D+25$, $D+25 - 7, D+25 - 7 -2,$ and $ D+25-7-2-(D+13)$, respectively. The label on the edge of $P$ incident to $e$ is at least $3$, a contradiction since it must be at most $2$. By a similar argument, it is impossible for $e$ and $e'$ to be incident to $V_0$ and $V_r$,  to $V_l$ and $V_0$, or to $V_r$ and $V_0$, respectively.
This proves the claim.\\
 
Assume first that $e$ is incident to $V_l$ and $e'$ is incident to $V_r$. \\

To prove part 2 of the lemma, consider a path $P'$ joining non-common endpoints of $e'$ and $e$ and passing through $u$. Recall that $\cd(u)=D+3$.
It has $8$ edges, with second and next to last  edges being $v_lw_l$ and $w_rv_r$ respectively.  As before, the labels on consecutive edges of $P'$ have labels at most $2, 9, 11, 12, 14+D, 16+D, 18+D, 25+D$, respectively. Since the label of the last edge is exactly $D+25$, we see that all the edges of $P'$ have exactly the labels listed: $2, 9, 11, 12, 14+D, 16+D, 18+D, 25+D$. So, $c(w_lv_l)=9$ and $c(w_rv_r)=D+18$. \\

To prove part 3 of the lemma, consider vertices $x$ and $y$. Because of the properties of $P'$ we see that $c(yu_r)=15+D$. Since $c(xu)=12$, $c(uy)=14+D$, and $\cd (u)=D+3$, all other labels on edges incident to $u$ are greater than $12$ and less than $14+D$. As the labels on edges incident to $u_r$ form an interval, the largest label on an edge incident to $u_r$ is therefore $15+D$, such that the interval is $15 + D, 14+D, \ldots, 14 + b$. 
This implies that labels on edges incident to $u$ and $U_r$ form an interval of $D-b+2$ integers with the largest one $14+D$. 
Thus, edges incident to $U_d$ get labels forming an interval of $b$ integers with the smallest integer in the interval equal to $13$.\\

Finally, we have seen that $c(F)=\{1, \ldots, D+26\}$.\\

If $e$ is incident to $V_r$, a similar to above argument gives that the edges incident to $U_d$ have labels forming an interval of $b$ integers ending with $14+D$. 
In this case we have the roles of $w_r, v_r$ and $w_l, v_l$ swapped, so $c(w_lv_l)= D+18$ and $c(w_rv_r)=9$. As before, $c(F)=\{1, \ldots, D+26\}$. So, this established the lemma in case when $c_1=1$. \\

If $\min c(V_l \cup V_r \cup V_0, v) = c_1 \neq 1$, consider an interval coloring $c'$ defined by $c'(z)=c(z)-c_1+1$, $z\in V(F)$, i.e., done by an appropriate  label shift. 
Now, we have that $\min c'(V_l \cup V_r \cup V_0, v)=1$ and we can apply the above considerations. 
\end{proof}

\subsection{Construction and properties of the graph ${\bf F}(k,d)$}

For positive integers $k, d$ where $d$ is even and $d \geq 24$,  
let  $F_0=F( k, 3k^2d + 1)$. Note that there are $k$ pendant edges in $F_0$.
Further, let $F_j= F(1, 2jdk + 1)$, $j=1, \ldots, k$.  Note that each $F_j$ has a single pendant edge, $j=1, \ldots, k$.
Let ${\bf F}(k,d)$ be formed by first considering pairwise vertex disjoing copies of $F_0, F_1, \ldots, F_k$ and then identifying the $j$th pendant edge of $F_0$ with a pendant edge of $F_j$ such that the vertex of degree one of the $j$th pendant edge of $F_0$ is identified with the vertex of degree greater than one in the pendant edge of $F_j$, $j=1, \ldots, k$.  See Figure \ref{fig:F} for an illustration. 

\begin{figure}[hb!]
	\centering
	\includegraphics[width = 0.7 \textwidth]{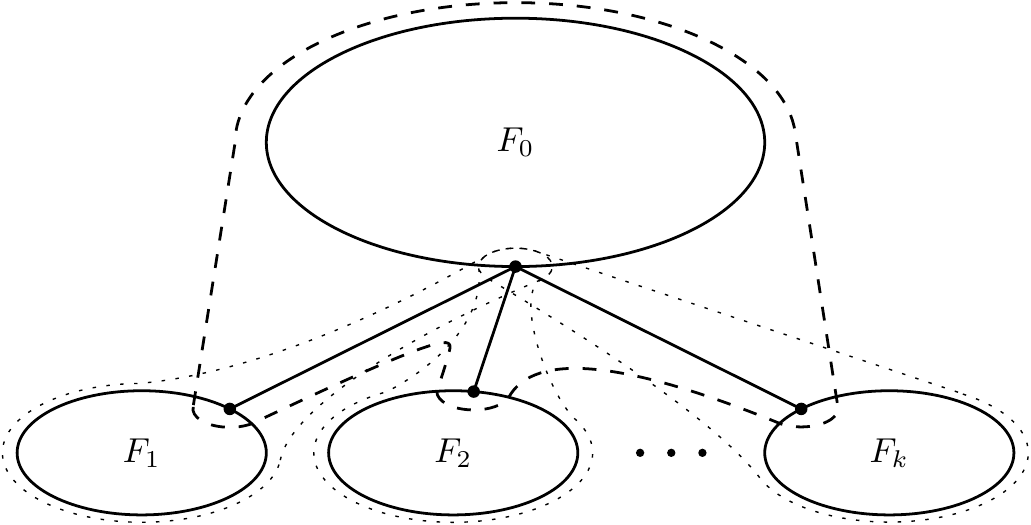}
	\caption{Graph ${\bf F}(k,d)$}
	\label{fig:F}
\end{figure}

\begin{lemma}\label{F}
For any positive integers  $k, d$, $d\geq 24$,  the graph ${\bf F} = {\bf F}(k,d) $ is interval colorable and has exactly $k$ gaps, each of size at least $d$, in its spectrum.
\end{lemma}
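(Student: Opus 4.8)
The plan is to use the rigidity statement of Lemma~\ref{s} to reduce interval colorings of ${\bf F}={\bf F}(k,d)$ to coordinated choices of interval colorings of its building blocks $F_0=F(k,T_0)$ and $F_j=F(1,T_j)$, $j=1,\dots,k$, where $T_0=3k^2d+1$ and $T_j=2jdk+1$. First I would check admissibility: with $D_0=T_0-25=3k^2d-24$ and $D_j=T_j-25=2jdk-24$ one has $D_0,D_j$ positive and even (using that $d$ is even, for $D_0$), and $k\le D_0$, $1\le D_j$ since $d\ge 24$, so Lemma~\ref{s} applies to each block. The structural core is the observation that ${\bf F}$ is glued from the $F_j$ only along the $k$ pendant edges, and the gluing identifies a degree-one endpoint of a pendant edge of $F_0$ with the high-degree vertex $u$ of the attached block $F_j$; consequently no vertex of ${\bf F}$ acquires incident edges beyond those it already had inside a single block. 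Hence the restriction of any interval coloring of ${\bf F}$ to a block $F_j$ is an interval coloring of $F_j$, and conversely any family of interval colorings of $F_0,\dots,F_k$ agreeing on the shared pendant edges glues to an interval coloring of ${\bf F}$. So $S({\bf F})$ is precisely the set of color counts realizable by such compatible families.

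Next I would normalize so that $c(F_0)=\{1,\dots,T_0+1\}$. By Lemma~\ref{s} the $k$ pendant edges of $F_0$ occupy, as a set, either the colors $\{13,\dots,12+k\}$ (case L) or the mirror set at the top of $\{1,\dots,T_0+1\}$ (case R); since negating all colors is an interval-coloring-preserving involution that flips L and R simultaneously in every block, it suffices to treat case L. In case L the pendant edge joining $F_0$ to $F_j$ has color $12+\sigma(j)$ for some permutation $\sigma$ of $\{1,\dots,k\}$, and every $\sigma$ is realizable because the vertices of $U_d$ in $F_0$ are interchangeable. For each $F_j$ (here $b=1$), Lemma~\ref{s} forces its single pendant edge to be the $13$-th color from the bottom or from the top of $c(F_j)$; matching this to $12+\sigma(j)$ gives $c(F_j)=[\sigma(j),\,\sigma(j)+T_j]$ (type L) or $c(F_j)=[24+\sigma(j)-T_j,\,24+\sigma(j)]$ (type R).

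Now I would assemble the global color range. A short check shows each $c(F_j)$ lies within $(-\infty,T_0+1]$, so $\max c({\bf F})=T_0+1$ always, and a type-L block is swallowed entirely by $\{1,\dots,T_0+1\}$. If all $F_j$ are type L the count is $T_0+1=3k^2d+2$. Otherwise let $m$ be the largest index with $F_m$ of type R; here is where $d\ge 24$ is used, through the estimate $T_m-T_j=2(m-j)dk\ge 2dk\ge 48k>k-1\ge|\sigma(m)-\sigma(j)|$, which forces $24+\sigma(m)-T_m$ to be the unique smallest of all the bottoms $24+\sigma(j)-T_j$; thus $\min c({\bf F})=24+\sigma(m)-T_m$ and the count equals $a_m-\sigma(m)$, with $a_m:=T_0+2mdk-21$. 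Conversely, making exactly $F_m$ type R with $\sigma(m)$ prescribed realizes every value $a_m-s$, $1\le s\le k$. Therefore
\[
S({\bf F})=\{\,3k^2d+2\,\}\ \cup\ \bigcup_{m=1}^{k}\{\,a_m-k,\ a_m-k+1,\ \dots,\ a_m-1\,\},
\]
a disjoint union of $k+1$ blocks: one singleton and $k$ intervals of length $k$. It then remains to verify that consecutive blocks are separated by gaps of size at least $d$: the gap before the first interval has size $2dk-k-23\ge 2d-24\ge d$ (this is the place where $d\ge 24$ is again needed, for $k=1$), and the gap between the $m$-th and $(m+1)$-st intervals has size $k(2d-1)\ge 2d-1\ge d$. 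Hence $S({\bf F})$ has exactly $k$ gaps, each of size at least $d$, and ${\bf F}$ is interval colorable since $3k^2d+2\in S({\bf F})$.

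The main obstacle is the reduction itself: one must verify carefully that the pendant-edge gluing imposes no coloring constraints beyond compatibility on those edges (so that interval colorings of ${\bf F}$ correspond bijectively to compatible tuples of colorings of the blocks), and then handle the a priori $2^{k+1}$ combinations of L/R types together with the permutation $\sigma$. The quantitative insight that collapses this case analysis — that only the outermost type-R block $F_m$ influences the low end of the color range, because the spans $T_j$ grow fast enough relative to $k$ once $d\ge 24$ — is the crux, and it is exactly where the hypothesis $d\ge 24$ does real work (its second use being the size bound on the first gap when $k=1$).
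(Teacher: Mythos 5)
Your proposal is correct and follows essentially the same route as the paper: restrict an interval coloring of ${\bf F}$ to the blocks $F_0,F_1,\dots,F_k$, invoke the rigidity of Lemma \ref{s} to pin the pendant-edge colors, classify each $F_j$ by whether its pendant edge is the $13$th color from the bottom or the top (the paper's types $1$ and $2$), observe that only the largest-index type-$2$ block determines the low end of the range, and read off $S({\bf F})=\{T_0+1\}\cup\bigcup_{m}[T_0+T_m-22-k,\,T_0+T_m-23]$. Your write-up is in fact slightly more careful than the paper's in checking the parity/admissibility of the parameters, in making the permutation $\sigma$ and the ``largest type-R index dominates'' estimate explicit, and in bounding all $k$ gaps rather than just the smallest one.
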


\begin{proof}
Since the $F_j$'s are interval colorable, one can create an interval coloring of each $F_j$, $j=1, \ldots, k$,  such that the pendant edge gets an arbitrary assigned value by shifting the labels appropriately. So, consider an arbitrary interval coloring of $F_0$ and then consider interval colorings of $F_1, \ldots, F_k$ such that the colors of the pendant edges equal to the colors of the corresponding pendant edges of $F_0$.  This gives an interval coloring of ${\bf F}$. \\

Let $T_0+1, T_1+1, \ldots, T_k+1$ be the number of colors used in  interval colorings of $F_0, \ldots, F_k$, respectively.
Let $c$ be an interval coloring of ${\bf F}$, assume without loss of generality that $c(F_0) = \{1, \ldots, T_0+1\}$.
Note that $c$ restricted to respective copies of  $F_0, 
F_1, \ldots, F_k$ is an interval coloring. Instead of saying ``a copy of $F_j$'', we just say $F_j$ for the rest of the proof.  
We know from Lemma \ref{s} that there are $k$ pendant edges in $F_0$ whose set of  colors  is either $\{13, 14, \ldots, 12+k\}$ or $\{(T_0 + 1)-12, (T_0 + 1)-13,  \ldots, (T_0 + 1)-11-k\}$. \\

Assume that the pendant edges of $F_0$ get the colors $13, 14, \ldots, 12+k$.  The situation when pendant edges in $F_0$ get colors $(T_0+1)-12, (T_0+1)-13,  \ldots, (T_0+1)-11-k$ is completely symmetric  resulting in the same number of colors used on ${\bf F}$ as in the respective configuration when the pendant edges of $F_0$ get the colors $13, 14, \ldots, 12+k$. Lemma \ref{s} implies that for each $j=1, \ldots, k$, the pendant edge of $F_j$ either gets the $13$th color of $c(F_j)$ or the last $13$th such color. We say that in the former case $F_j$  is of {\it type 1} under $c$ and in the latter case $F_j$ is of {\it type 2} under $c$.\\

If $F_j$ is  of type $1$ under $c$, $c(F_j) \subseteq c(F_0)$ since $T_0>T_j+k$.
If $F_j$ is of type $2$ under $c$, $\max c(F_j) \in c(F_0)$ and   $\min c(F_j)$ must take one of the values on the interval
 $[-T_j + 25, -T_j+24 + k]$, depending on whether the pendant edge of $F_j$ is identified with the edge of color $13, 14, \ldots,$ or $12+k$, respectively. \\

If  $j \in [1, k]$ is the largest index for which $F_j$ is of type $2$, then any such interval coloring of ${\bf F}$ uses $t$ colors for $t\in [T_0 + T_j - 22-k, T_0 + T_j - 23]$. Moreover, for any $t$ in this interval there is a corresponding interval coloring of ${\bf F}$. 
Observe that for any $j \in [1, k]$, there is an interval coloring of ${\bf F}$ such that $F_j$ is of type $2$ and each $F_i$ is of type $1$ for $i\in [1,k]\setminus \{j\}$.
 Thus, $S({\bf F})=\{T_0+1\} \cup \bigcup_{j=1}^k [T_0 + T_j - 22-k, T_0 + T_j - 23]$. Since $T_j =2jdk + 1$, $j\in [1,k]$,   we see that the interval spectrum has $k$ gaps of sizes at least $2dk - k - 23 \geq d$. 
\end{proof}

	
          \section{Proofs of the main results}\label{proofs}

	
	\begin{proof}[Proof of Theorem \ref{reg-bound}]
	Let $0<\gamma<1/2$ and $M$ be the constant guaranteed by Theorem \ref{gamma}. Let $G$ be a graph on $n$ vertices. Then, by Theorem \ref{gamma}, $G$ is a union of two graphs $G'$ and $G''$, where $\theta(G')\leq M$ and 
	$\| G''\| \leq \gamma n^2$. 
	By a result by Dean, Hutchinson, and Scheinermann \cite{DHS}, the arboricity of any graph is at most $\ceil{ \sqrt{e/2}}$, where $e$ is the number of edges in that graph. Since the interval thickness is at most the arboricity, we have $\theta(G'')\leq \sqrt{\gamma} n.$
	In particular, for large enough $n$, we have that $\theta(G) \leq M+ \sqrt{\gamma} n \leq 2\sqrt{\gamma} n.$
	This implies in particular that $\theta(n) =o(n)$.
	\end{proof}

\begin{proof}[Proof of Theorem \ref{spectral-gaps}]
This theorem follows immediately from Lemma \ref{F} using a construction of the graph ${\bf F}(k,d)$.
\end{proof}

\vskip 1cm

\noindent
{\bf Acknowledgements} ~~The authors would like to thank S. V. Avgustinovich for providing a copy of \cite{S} for them. The research was partially supported by the DFG grant FKZ AX 93/2-1.

\end{document}